\newtheorem{theorem}{Theorem}
\newtheorem{corollary}[theorem]{Corollary}
\newtheorem{definition}[theorem]{Definition}
\newtheorem{example}[theorem]{Example}
\newtheorem{lemma}[theorem]{Lemma}
\newtheorem{remark}[theorem]{Remark}
\newenvironment{proof}[1][Proof]{\noindent\textbf{#1.} }{\ \rule{0.5em}{0.5em}}
\begin{document}

\title{Absolutely continuous copulas obtained by regularization of the Frech%
\'{e}t--Hoeffding bounds}
\author{Oscar Bj\"{o}rnham$^{1}$, Niklas Br\"{a}nnstr\"{o}m$^{1}$ and Leif
Persson$^{1,2}$ \and  \\
$^{1}$Swedish Defence Research Institute, FOI,\\
SE-901 82 Ume\aa , Sweden.\\
$^{2}$Department of Mathematics and Mathematical Statistics,\\
Ume\aa\ University,\\
SE-901 87 Ume\aa , Sweden}
\maketitle

\begin{abstract}
We show that the lower and upper Frech\'{e}t-Hoeffding copulas, which are
singular, can be regularized to absolutely continuous copulas. The method,
which is constructive and explicit, states sufficient conditions for when an
absolutely continuous copula can be achieved by averaging. A higher degree
of regularisation cannot be achieved with the proposed method.
\end{abstract}

\section{Introduction}

A copula is a mathematical tool designed to join marginal probability
distribtions forming a joint multidimensional probability distribution. The
advantage with copulas is that they reduce the problem of studying
multidimensional probability distributions to studying a function, copula,
defined on the unit hypercube. For a full introduction to copulas we refer
to Nelsen \cite{Nelsen}. As pointed out in e.g. \cite{GenestEtAl},\cite{Yan},%
\cite{Patton} copulas have recently gained a lot in popularity. This is in
part driven by their usefulness in applied mathematics, especially
mathematical finance. This paper as well is motivated by a problem
originating from applied mathematics but the application is to be found in
toxicology.

Let $\mathbb{I}=\left[ 0,1\right] $ be the unit interval, and let $\mathbb{I}%
^{2}=\mathbb{I}\times \mathbb{I}$. A function $C\left( u,v\right) $ with
piecewise continuous second derivatives is a cumulative distribution
function on $\mathbb{I}^{2}$ if and only if 
\begin{eqnarray}
C_{uv}^{\prime \prime }\left( u,v\right) &\geq &0\text{, }u,v\in \left(
0,1\right) \text{, } \\
C\left( u,0\right) &=&0\text{, }u\in \left[ 0,1\right] \text{, and} \\
C\left( 0,v\right) &=&0\text{, }v\in \left[ 0,1\right]
\end{eqnarray}%
The lower boundary values $C\left( u,0\right) ,C\left( 0,v\right) $ are
zero; the upper boundary values $C\left( u,1\right) ,C\left( 1,v\right) $
are the marginal distributions. The function $C$ is said to be a \emph{copula%
} if the marginals are uniform which is summarised in the following
definition.

\begin{definition}
\label{Def:Copula}A two-dimensional \emph{copula} is a function $C$ from $%
\mathbb{I}^{2}$ to $\mathbb{I}$ with the following properties:\newline
1. For every $u,v$ in $\mathbb{I}$,%
\begin{equation}
C(u,0)=0=C(0,v)
\end{equation}%
and%
\begin{equation}
C(u,1)=u\text{ \ \ and \ \ }C(1,v)=v;
\end{equation}%
2. For every $u_{1},u_{2},v_{1},v_{2}$ in $\mathbb{I}$ such that $u_{1}\leq
u_{2}$ and $v_{1}\leq v_{2},$%
\begin{equation}
C(u_{2},v_{2})-C(u_{2},v_{1})-C(u_{1},v_{2})+C(u_{1},v_{1})\geq 0.
\end{equation}
\end{definition}

Let us consider an example.

\begin{example}
The function $C(u,v)=uv$ is a copula since $C(u,1)=u$, $C(1,v)=v$, $%
C(u,0)=0=C(0,v)$ and%
\begin{equation*}
C(u_{2},v_{2})-C(u_{2},v_{1})-C(u_{1},v_{2})+C(u_{1},v_{1})=(u_{2}-u_{1})(v_{2}-v_{1})\geq 0%
\text{.}
\end{equation*}
\end{example}

It turns out that any copula $C(u,v)$ is bounded from above by the Frech\'{e}%
t-Hoeffding upper bound, and from below by the Frech\'{e}t-Hoeffding lower
bound. Indeed the Frech\'{e}t-Hoeffding bounds are extremal copulas as they
are copulas themselves, which is stated in the theorem below.

\begin{definition}
The Frech\'{e}t-Hoeffding upper bound is given by $M=\min (u,v)$ and the
Frech\'{e}t-Hoeffding lower bound is given by $W=\max (u+v-1)$.
\end{definition}

\begin{theorem}
The upper and lower Frech\'{e}t-Hoeffding bounds $M$ and $W$ are copulas.
Moreover, for any copula $C$ and all $(u,v)\in \mathbb{I}$,%
\begin{equation}
W(u,v)\leq C(u,v)\leq M(u,v).
\end{equation}
\end{theorem}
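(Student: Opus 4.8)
The plan is to prove the statement in two stages: first, that $M(u,v)=\min(u,v)$ and $W(u,v)=\max(u+v-1,0)$ each satisfy the two conditions of Definition~\ref{Def:Copula}; and second, that those same two conditions, applied to an \emph{arbitrary} copula $C$, force $W\leq C\leq M$ pointwise on $\mathbb{I}^{2}$.

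For the first stage, note that both $M$ and $W$ map $\mathbb{I}^{2}$ into $\mathbb{I}$, and that the boundary identities are immediate from $0\leq u,v\leq 1$: $\min(u,0)=0$, $\min(u,1)=u$, $\max(u-1,0)=0$, $\max(u+1-1,0)=u$, and symmetrically in $v$. The only substantive point is the $2$-increasing (rectangle) inequality. For $M$, fix $v_{1}\leq v_{2}$ and set $g(t)=\min(t,v_{2})-\min(t,v_{1})$; a three-case check ($t\leq v_{1}$, then $v_{1}\leq t\leq v_{2}$, then $t\geq v_{2}$) shows $g$ is nondecreasing, so $g(u_{2})-g(u_{1})\geq 0$, which is exactly the rectangle inequality for $M$. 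For $W$, put $a=1-v_{2}\leq 1-v_{1}=b$ and $h(t)=\max(t-a,0)-\max(t-b,0)$; the identical three-case argument gives $h$ nondecreasing, hence the rectangle inequality for $W$. (One may also recognise $M$ and $W$ as the joint distribution functions of $(U,U)$ and $(U,1-U)$ with $U$ uniform on $\mathbb{I}$, so that each rectangle ``volume'' is the length of an interval and hence nonnegative; but the elementary computation above is self-contained.)

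For the second stage, let $C$ be any copula. Applying the rectangle inequality with corners $(u_{1},v_{1})=(0,v)$ and $(u_{2},v_{2})=(u,1)$ and using the boundary values $C(u,1)=u$, $C(0,1)=0$, $C(0,v)=0$ gives $u-C(u,v)\geq 0$; the symmetric choice of corners $(u,0)$ and $(1,v)$ gives $v-C(u,v)\geq 0$. Hence $C(u,v)\leq\min(u,v)=M(u,v)$. For the lower bound, the rectangle with corners $(u,v)$ and $(1,1)$ yields $1-v-u+C(u,v)\geq 0$, i.e.\ $C(u,v)\geq u+v-1$, while the rectangle with corners $(0,0)$ and $(u,v)$ (or simply $C\geq 0$, since $C$ maps into $\mathbb{I}$) gives $C(u,v)\geq 0$; together these give $C(u,v)\geq\max(u+v-1,0)=W(u,v)$.

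There is no deep obstacle here; the one place requiring care is the $2$-increasing property of $M$ and $W$, where the case analysis must keep the two boundary regimes $t\leq v_{1}$ and $t\geq v_{2}$ explicit so they are not absorbed incorrectly into the middle case. Everything else reduces to substituting well-chosen corner values into the rectangle inequality and invoking the uniform-marginal boundary conditions of Definition~\ref{Def:Copula}.
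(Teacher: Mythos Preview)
Your proof is correct and is essentially the standard argument one finds, e.g., in Nelsen's \emph{An Introduction to Copulas} (the paper's reference~\cite{Nelsen}). The paper itself does \emph{not} supply a proof of this theorem: it is stated as a background result and left unproved, with the reader implicitly referred to Nelsen for details. So there is no ``paper's own proof'' to compare against; your argument fills precisely the gap a reader would look up.

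On the substance: both stages are sound. The monotonicity checks for $g(t)=\min(t,v_{2})-\min(t,v_{1})$ and $h(t)=\max(t-a,0)-\max(t-b,0)$ are clean ways to verify the $2$-increasing property of $M$ and $W$, and your choices of rectangles $[0,u]\times[v,1]$, $[u,1]\times[0,v]$, and $[u,1]\times[v,1]$ together with the boundary identities of Definition~\ref{Def:Copula} yield the Fr\'echet--Hoeffding inequalities directly. Nothing is missing.
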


While generalising a model, the probit model \cite{Probit}, for estimating
the health impact on people exposed to hazardous chemicals we realised that
the problem can be cast as a copula problem. The standard probit model is
used to find a statistical description of the injury outcome of a population
exposed to toxic substances. The model can be utilized to describe this
outcome at several discrete injury states, such as \emph{light injuries}, 
\emph{severe injuries}, and \emph{death}. The joint probability distribution
of two succeeding injury states may be expressed as a multivariate normal
distribution with normal marginal distributions. By Sklar's theorem, see
e.g. \cite{Nelsen}, there exists a copula that prescribes how to joint,
couple, the marginal distributions to achieve the probit model. As it
happens the corresponding copula is the upper Frech\'{e}t-Hoeffding copula $%
M $. Our generalisation of the probit model aims at treating the population
as an aggregate of individuals rather than as one entity. In this approach
individuals are allocated one threshold value per injury state which
determines at which exposure they will reach every state. The thresholds are
distributed among the population according to the standard normal
distribution. However, individuals must be able to possess different
sensibility for the different injury states which means that they will not
be located on the diagonal axis on the multivariate probability distribution
which otherwise would be the trivial solution. As the thresholds are
distributed, the overall statistics of the population must be kept intact.
In copula terms this equates to rearranging the mass of the joint
probability distribution without changing the marginal distributions..
Adding the criteria that the probit model should readily support numerical
investigation we arrived at the condition that the generalised probit model
is feasible if the upper Frech\'{e}t-Hoeffding copula $M$ can be regularized
and still remain a copula. In this paper we will focus on this mathematical
problem while the full toxicological motivation and the generalised probit
model will be published elsewhere in the toxicology literature.\newline
The regularity that we are aiming for is absolute continuity, which is
defined in the following way:

\begin{definition}
A two-dimensional copula $C$ is said to be \emph{absolutely continuous} if $%
C(u,v)$ has a density $c(u,v)$ and%
\begin{equation}
C(u,v)\equiv \int_{0}^{u}\int_{0}^{v}c(s,t)dtds\text{ \ for all }(u,v)\in 
\mathbb{I}^{2}.
\end{equation}
\end{definition}

We note that $\partial ^{2}C/\partial u\partial v$ exists almost everywhere
in $\mathbb{I}^{2}$, see Theorem 2.2.7 in \cite{Nelsen}, and is equal to the
density $c(u,v)$. The example copula $C(u,v)=uv$ that we considered earlier
is absolutely continuous.

\begin{example}
(cont.) The copula $C(u,v)=uv$ is absolutely continuous since%
\begin{equation*}
\int_{0}^{u}\int_{0}^{v}1dsdt=uv\text{ for all }(u,v)\in \mathbb{I}^{2}.%
\text{ }
\end{equation*}
\end{example}

The Frech\'{e}t-Hoeffding copulas on the other hand are not absolutely
continuous, indeed they are not absolutely continuous on any subdomain $%
\subset $ $\mathbb{I}^{2}$, and are thus \emph{singular}. Geometrically the
Frech\'{e}t-Hoeffding upper and lower copula are $C^{0}$ surfaces each made
up of two planar surfaces intersecting transversally along a line segment,
see figure \ref{fig:FHcopulas}. Regularising these functions, $W$ and $M$,
is, of course, trivial: any type of averaging around and along the
transversal intersection will smooth out the $C^{1}$-discontinuity. The
point here is that the regularized function should still be a copula. For
our purposes, the probit model, it would suffice to work with only the upper
Frech\'{e}t-Hoeffding $M$ copula, but the regularization method works
equally well for the lower one $W$. Thus the main contribution in this paper
is that we prove under which conditions the upper and lower Frech\'{e}%
t-Hoeffding copula can be regularised to \emph{absolutely continuous copulas}%
, see Theorem \ref{Thm:Main}. We then show that this class of copulas is
non-empty by providing an explicit example. We remark that the proposed
averaging method can not be used to prove regularity beyond, at most, $C^{2}$
due to unbounded third derivatives.

The question of regularity of copulas and the associated differentiability
of copulas has been studied before and there are at least two well-known
classes of differentiable copulas: Archimedean and Farlie-Gumbel-Morgenstern
copulas. Recently a preprint claiming to characterize all twice
differentiable copulas was presented \cite{MJK2012}. In that paper also a
new class, Fourier copulas, of twice differentiable copulas is introduced.

\begin{figure}[!ht]
\centering
\includegraphics{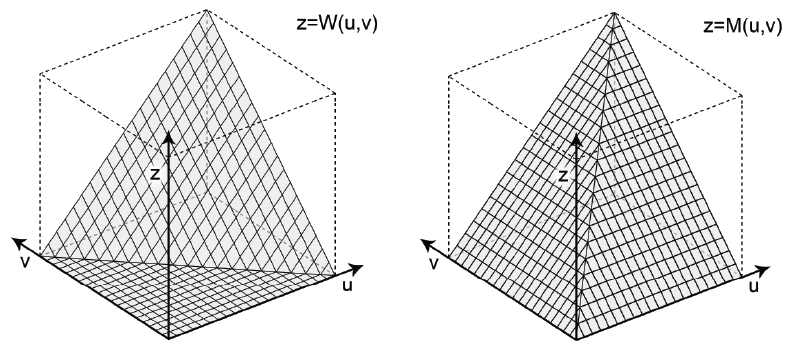}
\caption{Visualisation of the graph of
the Frech\'{e}t-Hoeffding copulas $M$ and $W$.}
\label{fig:FHcopulas}
\end{figure}

\section{Setting of the problem}

As was hinted at in the introduction, regularization of the upper and lower
Frech\'{e}t-Hoeffding copulas $M$ and $W$ will be achieved by averaging
these copulas at, and in the vicinity of, their respective singularities:
for $M$ the singularity is located along the diagonal while for $W$ it is
the anti-diagonal (see figure \ref{fig:FHcopulas}). As averaging method we
choose a family of discs and in each such disc we replace the value of the
copula with the average value of the copula in that disc. Due to the simple
geometry it turns out that these averages can be expressed explicitly,
however the expressions become somewhat involved due to evaluating integrals
on circle segments.

It is convenient to make the following change of coordinates%
\begin{eqnarray}
w &=&\left( v+u-1\right) /\sqrt{2} \\
z &=&\left( v-u\right) /\sqrt{2}
\end{eqnarray}%
and study the problem in $(w,z)$-coordinates. Let $C(u,v)$ be a copula
defined on $\mathbb{I}^{2}$. With a slight abuse of notation, we write $%
C\left( w,z\right) =C\left( u,v\right) $. $C(w,z)\,$is defined on%
\begin{equation}
U=\left\{ \left( w,z\right) \in \mathbb{R}^{2}:\left\vert w\right\vert
+\left\vert z\right\vert \leq 1/\sqrt{2}\right\} \text{.}
\end{equation}%
The copula conditions, stated in Definition \ref{Def:Copula}, can be
summarized in the following way%
\begin{eqnarray}
C_{uv}^{\prime \prime }\left( u,v\right) &\geq &0\text{, }\left( u,v\right)
\in int(\mathbb{I}^{2})  \label{def:Copula1} \\
C &=&\max \left( 0,u+v-1\right) =\min \left( u,v\right) \text{, }\left(
u,v\right) \in \partial \mathbb{I}^{2},  \label{def:Copula2}
\end{eqnarray}%
and may be checked in $(w,z)$-coordinates instead. The inverse coordinate
transformation is given by%
\begin{eqnarray}
u &=&\frac{w-z}{\sqrt{2}}+\frac{1}{2} \\
v &=&\frac{w+z}{\sqrt{2}}+\frac{1}{2}\text{,}
\end{eqnarray}%
which implies that the partial derivatives are transformed according to%
\begin{eqnarray}
\frac{\partial }{\partial u} &=&\frac{1}{\sqrt{2}}\left( \frac{\partial }{%
\partial w}-\frac{\partial }{\partial z}\right) \text{, }\frac{\partial }{%
\partial v}=\frac{1}{\sqrt{2}}\left( \frac{\partial }{\partial w}+\frac{%
\partial }{\partial z}\right) \text{, and} \\
\frac{\partial ^{2}}{\partial u\partial v} &=&\frac{1}{2}\left( \frac{%
\partial ^{2}}{\partial w^{2}}-\frac{\partial ^{2}}{\partial z^{2}}\right) 
\text{.}
\end{eqnarray}%
and thus the copula conditions (\ref{def:Copula1}) and (\ref{def:Copula2})
becomes%
\begin{eqnarray}
C_{ww}^{\prime \prime }-C_{zz}^{\prime \prime } &\geq &0\text{, }\left(
w,z\right) \in int(U)  \label{eqn:defCopulaC_wz''} \\
C &=&\max \left( 0,\sqrt{2}w\right) =\frac{w-\left\vert z\right\vert }{\sqrt{%
2}}+\frac{1}{2}\text{, }\left( w,z\right) \in \partial U
\label{eqn:defCopulaC_wzBoundary}
\end{eqnarray}%
in $(w,z)$-coordinates and where the interior $int(U)$ and the boundary $%
\partial U$ are given in the $\left( w,z\right) $--coordinates by:%
\begin{eqnarray}
int(U) &=&\left\{ \left( w,z\right) \in \mathbb{R}^{2}:\left\vert
w\right\vert +\left\vert z\right\vert <1/\sqrt{2}\right\} \\
\partial U &=&\left\{ w,z\in \mathbb{R}^{2}:\left\vert w\right\vert
+\left\vert z\right\vert =1/\sqrt{2}\right\}
\end{eqnarray}%
The expression for the boundary values occuring in the copula conditions is
defined for all $\left( w,z\right) \in U$:%
\begin{equation}
W=\frac{w+\left\vert w\right\vert }{\sqrt{2}}=\max \left( 0,\sqrt{2}w\right)
=\max \left( 0,v+u-1\right) \text{.}  \label{eqn:W}
\end{equation}%
This is a (non--smooth) copula, the lower Fr\'{e}chet-Hoeffding bound, where
all the probability mass is concentrated on the antidiagonal $\left\{
u+v=1\right\} =\left\{ w=0\right\} $. There is also an upper bound%
\begin{equation}
M=\frac{w-\left\vert z\right\vert }{\sqrt{2}}+\frac{1}{2}=\min \left( \frac{%
w-z}{\sqrt{2}}+\frac{1}{2},\frac{w+z}{\sqrt{2}}+\frac{1}{2}\right) =\min
\left( u,v\right)  \label{eqn:M}
\end{equation}%
which is a non--smooth copula, the upper Fr\'{e}chet-Hoeffding bound, where
all the probability mass is concentrated on the diagonal $\left\{
u=v\right\} =\left\{ z=0\right\} $. Now we are in a position to state the
main theorem describing under which conditions the Frech\'{e}t-Hoeffding
bounds can be regularised using the proposed averaging method.

\begin{theorem}
\label{Thm:Main}Let $r\left( z,w\right) $ be a twice continuously
differentiable and strictly positive function defined for $\left\vert
w\right\vert +\left\vert z\right\vert <1/\sqrt{2}$. Let 
\begin{equation}
w=\left( v+u+1\right) /\sqrt{2},z=\left( v-u\right) /\sqrt{2}\text{,}
\end{equation}%
and let $D_{r}\left( u,v\right) $ be the circular disk with radius $r$
centered at $\left( u,v\right) $, and let%
\begin{equation}
g\left( \rho \right) =\left\{ 
\begin{array}{ccc}
\left. 2\left( \rho \arcsin \left( \rho \right) +\sqrt{1-\rho ^{2}}\left.
\left( 2+\rho ^{2}\right) \right/ 3\right) \right/ \pi & \text{if} & 
\left\vert \rho \right\vert <1 \\ 
\left\vert \rho \right\vert & \text{if} & \left\vert \rho \right\vert \geq 1%
\text{.}%
\end{array}%
\right.
\end{equation}%
Then

\begin{enumerate}
\item If $\left( r_{z}^{\prime }\right) ^{2}\leq \left( 1/2-\left\vert
r_{w}^{\prime }\right\vert \right) ^{2}+3/4$ and $r_{zz}^{\prime \prime
}\leq r_{ww}^{\prime \prime }$ then%
\begin{equation}
\bar{W}\left( u,v\right) \equiv \frac{1}{\pi r^{2}\left( w,z\right) }%
\diint\limits_{D_{r\left( w,z\right) }\left( u,v\right) }W\left( u^{\prime
},v^{\prime }\right) du^{\prime }dv^{\prime }  \label{eqn:defW_bar}
\end{equation}%
is an absolutely continuous copula with probability mass supported on $%
\left\{ \left\vert w\right\vert \leq r\left( w,z\right) \right\} $, and%
\begin{equation}
\bar{W}\left( u,v\right) =\frac{1}{\sqrt{2}}\left( w+r\left( w,z\right)
g\left( \frac{w}{r\left( w,z\right) }\right) \right)  \label{eqn:W_bar}
\end{equation}

\item If $\left( r_{w}^{\prime }\right) ^{2}\leq \left( 1/2-\left\vert
r_{z}^{\prime }\right\vert \right) ^{2}+3/4$ and $r_{ww}^{\prime \prime
}\leq r_{zz}^{\prime \prime }$ then%
\begin{equation}
\bar{M}\left( u,v\right) \equiv \frac{1}{\pi r^{2}\left( w,z\right) }%
\diint\limits_{D_{r\left( w,z\right) }\left( u,v\right) }M\left( u^{\prime
},v^{\prime }\right) du^{\prime }dv^{\prime }  \label{eqn:defM_bar}
\end{equation}%
is an absolutely continuous copula with probability mass supported on $%
\left\{ \left\vert z\right\vert \leq r\left( w,z\right) \right\} $ and%
\begin{equation}
\bar{M}\left( u,v\right) =\frac{1}{\sqrt{2}}\left( w+\sqrt{2}-r\left(
w\right) g\left( \frac{z}{r\left( w,z\right) }\right) \right)
\label{eqn:M_bar}
\end{equation}
\end{enumerate}
\end{theorem}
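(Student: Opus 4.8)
The plan is to push everything into the $(w,z)$-coordinates of the preceding section, where $W$ and $M$ each depend on a single coordinate, to evaluate the disc averages in closed form, and then to verify the two conditions of Definition~\ref{Def:Copula} directly on the resulting formula. I will carry out the argument for $\bar{W}$; the statement for $\bar{M}$ follows by the identical computation with the roles of $w$ and $z$ exchanged, because by (\ref{eqn:M}) the function $M=\tfrac{1}{\sqrt2}\big(\tfrac{1}{\sqrt2}+w-|z|\big)$ is singular along $\{z=0\}$ in the same way that $W=\sqrt2\max(0,w)$ is singular along $\{w=0\}$.

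First, the map $(u,v)\mapsto(w,z)$ is a rotation followed by a translation, hence a Euclidean isometry; it carries the disc $D_{r}(u,v)$ onto the disc of the same radius $r$ about $(w,z)$ and preserves area. Since $W$ in the new coordinates is $\sqrt2\max(0,w')$, a function of $w'$ alone, I integrate (\ref{eqn:defW_bar}) over chords perpendicular to the $w$-axis: the chord of the disc at level $w'$ has length $2\sqrt{r^{2}-(w'-w)^{2}}$ for $|w'-w|\le r$. The substitution $w'=w+r t$ reduces the double integral to $\tfrac{2\sqrt2\,r}{\pi}\int_{-1}^{1}\max(0,\rho+t)\sqrt{1-t^{2}}\,dt$ with $\rho=w/r$, and evaluating this (splitting the integral at $t=-\rho$, and using $\int_{-1}^{1}\sqrt{1-t^{2}}\,dt=\tfrac{\pi}{2}$) yields exactly $\bar{W}=\tfrac1{\sqrt2}\big(w+r\,g(\rho)\big)$, i.e. (\ref{eqn:W_bar}); the two branches of $g$ correspond to $|\rho|<1$ and $|\rho|\ge1$. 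For $\bar{M}$ the same slicing perpendicular to the $z$-axis, together with the fact that $g$ is an even function, turns the average of $|z'|$ into $r\,g(z/r)$ and gives (\ref{eqn:M_bar}).

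Next I establish the regularity. Differentiation gives $g'(\rho)=\tfrac2\pi\big(\arcsin\rho+\rho\sqrt{1-\rho^{2}}\big)$ and $g''(\rho)=\tfrac4\pi\sqrt{1-\rho^{2}}$ for $|\rho|<1$, and these limits match $g'=\operatorname{sgn}\rho$, $g''=0$ at $|\rho|=1$, so $g\in C^{2}(\mathbb{R})$. As $r$ is $C^{2}$ and strictly positive, $\psi:=r\,g(w/r)$ and hence $\bar{W}=\tfrac1{\sqrt2}(w+\psi)$ are $C^{2}$ on $int(U)$, so $\bar{W}$ has a continuous second mixed derivative $c:=\partial^{2}\bar{W}/\partial u\partial v$ and $\bar{W}(u,v)=\int_{0}^{u}\int_{0}^{v}c(s,t)\,dt\,ds$; thus $\bar{W}$ is absolutely continuous as soon as $c\ge0$. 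To see the support, I compute $c$: by the transformation rule for $\partial^{2}/\partial u\partial v$ one has $c=\tfrac1{2\sqrt2}(\psi_{ww}''-\psi_{zz}'')$, and a chain-rule computation with $\rho=w/r$ and $G(\rho):=g(\rho)-\rho g'(\rho)=\tfrac{4}{3\pi}(1-\rho^{2})^{3/2}$ gives
\[
\psi_{ww}''-\psi_{zz}''=(r_{ww}''-r_{zz}'')\,G(\rho)+\frac{g''(\rho)}{r}\Big[(1-\rho\,r_{w}')^{2}-\rho^{2}(r_{z}')^{2}\Big].
\]
Since $G$ and $g''$ both vanish for $|\rho|\ge1$, it follows that $c\equiv0$ off $\{|w|\le r(w,z)\}$, which is the asserted support.

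It remains to prove $c\ge0$, the 2-increasing condition, and this is the main obstacle. On $\{|\rho|\le1\}$ the hypothesis $r_{zz}''\le r_{ww}''$ makes the first summand above nonnegative, and, rewriting with $G(\rho)=\tfrac13(1-\rho^{2})g''(\rho)$, the whole quantity equals $\tfrac{g''(\rho)}{3r}$ times
\[
3\big(1-\rho\,r_{w}'\big)^{2}-3\rho^{2}(r_{z}')^{2}+r\,(1-\rho^{2})\big(r_{ww}''-r_{zz}''\big),
\]
which is quadratic in $\rho$; its values at $\rho=\pm1$ are $3\big[(1\mp r_{w}')^{2}-(r_{z}')^{2}\big]$, and the bound $(r_{z}')^{2}\le(1/2-|r_{w}'|)^{2}+3/4$, together with the fact that $r>0$ throughout $int(U)$, is precisely what is needed to keep this quantity $\ge0$ on all of $[-1,1]$; I expect this bookkeeping to be the technical heart of the proof. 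Finally, on $\partial\mathbb{I}^{2}$ (which is $\partial U$ in the new coordinates) the support of $c$ does not reach the boundary, so $|w|\ge r$ and $g(\rho)=|\rho|$ there, and (\ref{eqn:W_bar}) collapses to $\bar{W}=\tfrac1{\sqrt2}(w+|w|)=\max(0,u+v-1)$, giving $\bar{W}(u,0)=\bar{W}(0,v)=0$, $\bar{W}(u,1)=u$ and $\bar{W}(1,v)=v$; combined with $c\ge0$ and the integral representation, $\bar{W}$ satisfies Definition~\ref{Def:Copula} and is absolutely continuous. The assertion for $\bar{M}$ follows verbatim after interchanging $w$ and $z$, which turns the hypotheses into $(r_{w}')^{2}\le(1/2-|r_{z}'|)^{2}+3/4$ and $r_{ww}''\le r_{zz}''$.
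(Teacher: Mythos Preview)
Your approach is the same as the paper's: pass to $(w,z)$--coordinates, evaluate the disc average in closed form as $r\,g(\rho)$, observe $g\in C^{2}(\mathbb{R})$, and reduce the 2--increasing property to the nonnegativity on $[-1,1]$ of a quadratic polynomial in $\rho$. The paper simply packages the first steps into a separate averaging lemma (your $\psi$ is the paper's $F$, and your $G(\rho)=g(\rho)-\rho g'(\rho)$ is the paper's auxiliary $h$), but the content is identical, including the support statement via the vanishing of $g''$ and $G$ for $|\rho|\ge1$.

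The gap is precisely the step you label ``bookkeeping.'' Checking only $\rho=\pm1$ is not sufficient for a quadratic to be nonnegative on $[-1,1]$, and in fact your endpoint test does \emph{not} match the hypothesis: $3[(1\mp r_w')^{2}-(r_z')^{2}]\ge0$ is equivalent to $|r_z'|+|r_w'|\le1$, which is strictly stronger than $(r_z')^{2}\le(1/2-|r_w'|)^{2}+3/4=1-|r_w'|+(r_w')^{2}$. The paper handles the quadratic $a\rho^{2}+b\rho+c$ by two separate sufficient conditions --- the endpoint inequality $|b|\le a+c$, and a bound on the interior minimum obtained from $c\ge1$ --- and it is the second--derivative hypothesis $r_{zz}''\le r_{ww}''$ that forces $c\ge1$, not merely the sign of the $G(\rho)$--summand as you use it. You have to carry this two--part analysis out. (A warning: if you expand your quadratic and compare with the paper's, the linear coefficients differ by a factor of~$2$; one of the two computations contains an arithmetic slip, and since the stated hypotheses were derived from the paper's version you should be prepared for the verification not to close cleanly with your formula.) A second, smaller gap: your boundary argument presumes $|w|\ge r$ on $\partial U$, which is not among the hypotheses on $r$; the paper is equally brief at this point.
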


We remark that the conditions stated in the theorem are not sharp, and this
is pointed out in the proof which we postpone to the next section. Let us
first consider an example showing that the class of copulas generated by the
theorem is non-empty.

\begin{example}
Symmetric Gaussian copula. Let $\Phi $ be the cumulative distribution
function of a standard normal random variable, $\varphi \left( x\right)
=\Phi ^{\prime }\left( x\right) =\exp \left( -x^{2}/2\right) /\sqrt{2\pi 
\text{ }}$its probability density function, and consider a radius function $%
r\left( w\right) $ implicitly defined by%
\begin{equation}
\Phi ^{-1}\left( \frac{w+r\left( w\right) }{\sqrt{2}}+\frac{1}{2}\right)
-\Phi ^{-1}\left( \frac{w-r\left( w\right) }{\sqrt{2}}+\frac{1}{2}\right) =d
\end{equation}%
where $d>0$ is a given constant. Denote%
\begin{eqnarray}
u &=&\frac{w-r\left( w\right) }{\sqrt{2}}+\frac{1}{2}\text{, }x=\Phi
^{-1}\left( u\right) \text{, } \\
v &=&\frac{w+r\left( w\right) }{\sqrt{2}}+\frac{1}{2}\text{, }y=\Phi
^{-1}\left( v\right) \text{.}
\end{eqnarray}%
Implicit differentiation gives 
\begin{equation}
\frac{1}{\varphi \left( y\right) }\frac{1+r^{\prime }}{\sqrt{2}}-\frac{1}{%
\varphi \left( x\right) }\frac{1-r^{\prime }}{\sqrt{2}}=0
\end{equation}%
which gives%
\begin{equation}
r^{\prime }=\frac{\varphi \left( y\right) -\varphi \left( x\right) }{\varphi
\left( y\right) +\varphi \left( x\right) }=\tanh \left( s\right) \text{,}
\end{equation}%
where 
\begin{equation}
s=\left( x^{2}-y^{2}\right) /4=-d\left( x+y\right) /4
\end{equation}%
by the definitions above. Hence $\left\vert r^{\prime }\left( w\right)
\right\vert <1$. Furthermore, 
\begin{equation}
r^{\prime \prime }=\tanh ^{\prime }\left( s\right) s^{\prime }=-d\left(
1-\tanh ^{2}\left( s\right) \right) \left( x^{\prime }+y^{\prime }\right) /4
\end{equation}%
hence it suffices to show that $x^{\prime }\left( w\right) +y^{\prime
}\left( w\right) \geq 0$ for $r_{ww}^{\prime \prime }\leq r_{zz}^{\prime
\prime }=0$. But 
\begin{equation}
x^{\prime }+y^{\prime }=\frac{1}{\varphi \left( x\right) }\frac{1-r^{\prime }%
}{\sqrt{2}}+\frac{1}{\varphi \left( y\right) }\frac{1+r^{\prime }}{\sqrt{2}}%
\geq 0
\end{equation}%
since $\left\vert r^{\prime }\left( w\right) \right\vert <1$ and $\varphi >0$%
. Hence $r\left( w\right) $ satisfies the assumptions in the theorem, so $%
\bar{M}\left( u,v\right) $ is an absolutely continuous copula with
probability mass supported on $\left\vert z\right\vert \leq r\left( w\right) 
$.\newline
It may be interesting to note that upon transforming the copula random
variables $\left( U,V\right) $ back to the original $X=\Phi ^{-1}\left(
U\right) $, $Y=\Phi ^{-1}\left( V\right) $ we obtain a pair of Gaussian
random variables $\left( X,Y\right) $ with standard normal marginal
distributions $p_{X}\left( x\right) =\varphi \left( x\right) $, $p_{Y}\left(
y\right) =\varphi \left( y\right) $ and joint pdf $p_{X,Y}\left( x,y\right) $
supported on $\left\vert y-x\right\vert \leq d$.
\end{example}

As the example shows, verifying the conditions in Theorem \ref{Thm:Main} may
require some work. In the example the radius function $r(w,z)$ did not have
a general dependence on both variables, indeed $r(w,z)=r(w)$ in the example,
and this simplified the verification of the conditions. In general the
conditions are easier to verify if the radius function has a
product-structure $r(w,z)=p(w)q(z)$ which we state in the following
corollary of Theorem \ref{Thm:Main}.

\begin{corollary}
If $r\left( w,z\right) =p\left( w\right) q\left( z\right) $ and $p,q>0$ then
the conclusions for $\bar{M}$ are true if%
\begin{eqnarray}
q\left( z\right) ^{2}\left( p^{\prime }\left( w\right) \right) ^{2} &\leq
&\left( 1/2-\left\vert q^{\prime }\left( z\right) \right\vert p\left(
w\right) \right) ^{2}+3/4 \\
\frac{p^{\prime \prime }\left( w\right) }{p\left( w\right) } &\leq &\frac{%
q^{\prime \prime }\left( z\right) }{q\left( z\right) }
\end{eqnarray}%
for all $\left\vert w\right\vert +\left\vert z\right\vert <1/\sqrt{2}$. In
particular, it holds true if $\varepsilon \in \left[ -1,1\right] $, $p\left(
w\right) >0$ and $q\left( z\right) =\left( 1+\sqrt{2}\varepsilon z\right) $
and%
\begin{eqnarray}
\left( p^{\prime }\left( w\right) \right) ^{2} &\leq &\frac{\left( 1/2-\sqrt{%
2}\left\vert \varepsilon \right\vert p\left( w\right) \right) ^{2}+3/4}{%
\left( 1+\sqrt{2}\varepsilon z\right) ^{2}} \\
p^{\prime \prime }\left( w\right) &\leq &0
\end{eqnarray}%
for all $\left\vert w\right\vert +\left\vert z\right\vert <1/\sqrt{2}$. In
the symmetric case $\varepsilon =0$, the condition reduces to%
\begin{eqnarray}
\left( p^{\prime }\left( w\right) \right) ^{2} &\leq &1 \\
p^{\prime \prime }\left( w\right) &\leq &0\text{.}
\end{eqnarray}
\end{corollary}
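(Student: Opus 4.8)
The plan is to obtain the corollary as a direct algebraic specialization of part (2) of Theorem \ref{Thm:Main}. First I would record the elementary fact that if $p$ and $q$ are twice continuously differentiable and strictly positive, then $r(w,z)=p(w)q(z)$ is twice continuously differentiable and strictly positive on $\{|w|+|z|<1/\sqrt 2\}$, so that Theorem \ref{Thm:Main} is applicable with this $r$. Then I would compute the derivatives occurring in the hypotheses of part (2): $r_w'=p'(w)q(z)$, $r_z'=p(w)q'(z)$, $r_{ww}''=p''(w)q(z)$ and $r_{zz}''=p(w)q''(z)$.

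Substituting these into the first hypothesis $(r_w')^2\le(1/2-|r_z'|)^2+3/4$ and using $p>0$ to write $|r_z'|=|p(w)q'(z)|=p(w)|q'(z)|$ gives precisely $q(z)^2(p'(w))^2\le(1/2-|q'(z)|p(w))^2+3/4$. Substituting into the second hypothesis $r_{ww}''\le r_{zz}''$ gives $p''(w)q(z)\le p(w)q''(z)$, and since $p(w)q(z)>0$ we may divide to obtain the stated $p''(w)/p(w)\le q''(z)/q(z)$; note the domain constraint $|w|+|z|<1/\sqrt 2$ is simply carried along unchanged even though the two sides decouple in the variables. This establishes the general product-structure claim, and the conclusions for $\bar M$ then follow verbatim from Theorem \ref{Thm:Main}.

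For the special choice $q(z)=1+\sqrt 2\,\varepsilon z$ with $\varepsilon\in[-1,1]$, I would first verify admissibility: on $\{|w|+|z|<1/\sqrt 2\}$ we have $|z|<1/\sqrt 2$, hence $|\sqrt 2\,\varepsilon z|<|\varepsilon|\le 1$ and therefore $q(z)>0$. Since $q'(z)=\sqrt 2\,\varepsilon$ is constant and $q''(z)=0$, the two product-case conditions reduce, after dividing by the positive factors $(1+\sqrt 2\,\varepsilon z)^2$ and $q(z)$ respectively, to $(p'(w))^2\le[(1/2-\sqrt 2\,|\varepsilon|\,p(w))^2+3/4]/(1+\sqrt 2\,\varepsilon z)^2$ and $p''(w)\le 0$. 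Putting $\varepsilon=0$ makes $q\equiv 1$ and collapses the first inequality to $(p'(w))^2\le(1/2)^2+3/4=1$, giving the symmetric case.

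There is essentially no serious obstacle here, since the corollary is purely a matter of substitution. The only points needing a moment's care are pulling $p(w)$ — and, in the special case, $q(z)$ — out of the absolute values using positivity, dividing the second inequality through by the positive quantity $p(w)q(z)$, and checking that $q(z)=1+\sqrt 2\,\varepsilon z$ remains positive on the relevant domain, which is exactly where the restriction $\varepsilon\in[-1,1]$ enters.
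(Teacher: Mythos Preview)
Your proposal is correct and is precisely the approach implicit in the paper: the corollary is obtained by direct substitution of $r(w,z)=p(w)q(z)$ and its derivatives into the hypotheses of Theorem~\ref{Thm:Main}(2), using positivity of $p$ and $q$ to manipulate absolute values and divide through. The paper offers no separate proof of this corollary, and your checks on the positivity of $q(z)=1+\sqrt{2}\varepsilon z$ and the role of the restriction $\varepsilon\in[-1,1]$ are exactly the small verifications needed.
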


\begin{remark}
This allows for skew distributions, since the probability mass of $\bar{M}$
is supported on $\left\vert z\right\vert \leq \left( 1+\sqrt{2}\varepsilon
z\right) p\left( w\right) $, i.e., 
\begin{equation}
-\frac{p\left( w\right) }{1+\sqrt{2}\varepsilon p\left( w\right) }\leq z\leq 
\frac{p\left( w\right) }{1-\sqrt{2}\varepsilon p\left( w\right) }\text{.}
\end{equation}%
The ratio $\kappa $ between modulus of the upper and lower limit is%
\begin{equation}
\kappa =\frac{1+\sqrt{2}\varepsilon p\left( w\right) }{1-\sqrt{2}\varepsilon
p\left( w\right) }\leq \frac{1+\sqrt{2}p\left( w\right) }{1-\sqrt{2}p\left(
w\right) }
\end{equation}
\end{remark}

The corollary and the remark is stated for $\bar{M}(u,v)$, but the analogous
result holds for $\bar{W}\left( u,v\right) $ by interchanging the roles of $%
p(w)$ and $q(z)$, and $w$ and $z$.

\section{Proof of Theorem \protect\ref{Thm:Main}}

The proof of Theorem \ref{Thm:Main} relies on being able to construct radius
functions $r=r\left( w,z\right) $ such that the circular disc averages of
the functions $\left\vert w\right\vert $ and $\left\vert z\right\vert $,%
\begin{eqnarray}
F\left( w,z\right) &=&\frac{1}{\pi r^{2}}\diint\limits_{D_{r}\left(
w,z\right) }\left\vert w^{\prime }\right\vert dw^{\prime }dz^{\prime } \\
G\left( w,z\right) &=&\frac{1}{\pi r^{2}}\diint\limits_{D_{r}\left(
w,z\right) }\left\vert z^{\prime }\right\vert dw^{\prime }dz^{\prime },
\end{eqnarray}%
satisfy%
\begin{eqnarray}
F_{ww}^{\prime \prime }-F_{zz}^{\prime \prime } &\geq &0 \\
G_{zz}^{\prime \prime }-G_{ww}^{\prime \prime } &\geq &0\text{, }
\end{eqnarray}%
where $D_{r}(w,z)$ is the circular disc with radius $r$ centred at $(w,z)$.
Once we have established what conditions the radius functions $r=r(w,z)$
have to satisfy (the averaging Lemma \ref{Lemma:Average}\ below) we pick
those radius functions and apply the disc averaging method to the upper and
the lower Frech\'{e}t-Hoeffding copulas $M$ and $W$ and show that it results
in absolutely continuous copulas: regularised Frech\'{e}t-Hoeffding bounds $%
\overline{M}$ and $\overline{W}$.

Evaluating the disc averages of the functions $\left\vert w\right\vert $ and 
$\left\vert z\right\vert $ and exploring the differentiability properties of
these averages with respect to the choice of radius function $r(w,z)$ and
the part of the domain that is considered is summarised in the following
technical averaging Lemma.

\begin{lemma}
\label{Lemma:Average}Let $r\left( w,z\right) $ be twice continuously
differentiable and strictly positive, and let $D_{r}(w,z)$ be the circular
disc with radius $r$ centred at $(w,z)$, and let%
\begin{equation}
g\left( \rho \right) =\left\{ 
\begin{array}{ccc}
\left. 2\left( \rho \arcsin \left( \rho \right) +\sqrt{1-\rho ^{2}}\left.
\left( 2+\rho ^{2}\right) \right/ 3\right) \right/ \pi & \text{if} & 
\left\vert \rho \right\vert <1 \\ 
\left\vert \rho \right\vert & \text{if} & \left\vert \rho \right\vert \geq 1%
\text{.}%
\end{array}%
\right.  \label{eqn:g}
\end{equation}%
Then 
\begin{eqnarray}
F\left( w,z\right) &\equiv &\frac{1}{\pi r\left( w,z\right) ^{2}}%
\diint\limits_{D_{r\left( w,z\right) }\left( w,z\right) }\left\vert
w^{\prime }\right\vert du^{\prime }dv^{\prime }=r\left( w,z\right) g\left( 
\frac{w}{r\left( w,z\right) }\right)  \label{eqn:F} \\
G\left( w,z\right) &\equiv &\frac{1}{\pi r\left( w,z\right) ^{2}}%
\diint\limits_{D_{r\left( w,z\right) }\left( w,z\right) }\left\vert
z^{\prime }\right\vert dw^{\prime }dz^{\prime }=r\left( w,z\right) g\left( 
\frac{z}{r\left( w,z\right) }\right)  \label{eqn:G}
\end{eqnarray}%
and $F$, $G$ are twice continuously differentiable. Moreover,

\begin{enumerate}
\item If $\left\vert w\right\vert \geq r\left( w,z\right) $ then $F\left(
w,z\right) =\left\vert w\right\vert $ and $F_{ww}^{\prime \prime
}-F_{zz}^{\prime \prime }=0$,

\item if $\left\vert z\right\vert \geq r\left( w,z\right) $ then $G\left(
w,z\right) =\left\vert z\right\vert $ and $G_{zz}^{\prime \prime
}-G_{ww}^{\prime \prime }=0$,

\item if $\left\vert w\right\vert <r\left( w,z\right) $, $\left(
r_{z}^{\prime }\right) ^{2}\leq \left( 1/2-\left\vert r_{w}^{\prime
}\right\vert \right) ^{2}+3/4$ and $r_{zz}^{\prime \prime }\leq
r_{ww}^{\prime \prime }$, then $F_{ww}^{\prime \prime }-F_{zz}^{\prime
\prime }\geq 0$, and

\item if $\left\vert z\right\vert <r\left( w,z\right) $, $\left(
r_{w}^{\prime }\right) ^{2}\leq \left( 1/2-\left\vert r_{z}^{\prime
}\right\vert \right) ^{2}+3/4$ and $r_{ww}^{\prime \prime }\leq
r_{zz}^{\prime \prime }$, then $G_{zz}^{\prime \prime }-G_{ww}^{\prime
\prime }\geq 0$.\qquad
\end{enumerate}
\end{lemma}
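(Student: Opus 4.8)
The plan is to compute the disc averages $F$ and $G$ explicitly and then differentiate twice, carefully tracking where the radius function $r(w,z)$ enters. First I would establish the closed forms \eqref{eqn:F} and \eqref{eqn:G}. By symmetry it suffices to treat $F$; the case of $G$ follows by swapping $w\leftrightarrow z$. Fix a point $(w,z)$ and write $\rho=w/r$. The integral $\tfrac{1}{\pi r^2}\diint_{D_r(w,z)}|w'|\,dw'dz'$ depends only on the disc, not on $z$, so one may shift so the disc is centred at the origin and integrate $|w+t|$ over the disc of radius $r$ about the origin. Passing to the scaled variable this reduces to a one-dimensional integral of the chord length times $|{\cdot}|$, which after splitting at the sign change of the integrand and using the standard antiderivatives of $\sqrt{1-s^2}$ and $s\sqrt{1-s^2}$ yields exactly $r\,g(w/r)$ with $g$ as in \eqref{eqn:g} when $|\rho|<1$, and the trivial value $|w|$ when the disc lies entirely on one side of $\{w=0\}$, i.e. $|\rho|\ge 1$. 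I would also check that $g\in C^2(\mathbb{R})$: it is smooth on $(-1,1)$ and on $|\rho|>1$, and at $\rho=\pm1$ one matches values and first two derivatives (the third derivative blows up, which is the remark about $C^2$ being optimal). Consequently $F(w,z)=r(w,z)\,g\!\left(w/r(w,z)\right)$ is $C^2$ as a composition of $C^2$ functions.

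Next I would prove items 1 and 2. If $|w|\ge r(w,z)$ in a neighbourhood of the point — or more precisely if $|\rho|\ge 1$ — then $F=|w|$ identically there, so $F''_{ww}=F''_{zz}=0$ and the difference vanishes; likewise for $G$. (One must note that on the set $|w|=r$ the two formula branches agree to second order, by the $C^2$ matching of $g$, so this is consistent.)

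The heart of the proof is items 3 and 4. With $F=r\,g(w/r)$ and $\rho=w/r$, I would compute $F'_w$, $F'_z$, $F''_{ww}$, $F''_{zz}$ by the chain rule, expressing everything in terms of $g(\rho)$, $g'(\rho)$, $g''(\rho)$ and the partials of $r$. The combination $F''_{ww}-F''_{zz}$ then organizes into (a) terms carrying $g''(\rho)$, times a nonnegative coefficient involving $(r'_w)^2$, $(r'_z)^2$ and lower-order pieces, plus (b) terms carrying $g'(\rho)$ or $g(\rho)$, times $r''_{ww}-r''_{zz}$ and first-derivative quantities. The key analytic inputs are the sign and size of $g', g''$: from \eqref{eqn:g} one checks $g''(\rho)\ge 0$ on $(-1,1)$ (in fact $g$ is convex, being a smoothing of $|\rho|$), and one needs the precise identity for $g''$ — something like $g''(\rho)=\tfrac{4}{\pi}\sqrt{1-\rho^2}$ up to constants — to see that the $g''$-coefficient is controlled exactly by the quantity $(1/2-|r'_w|)^2+3/4-(r'_z)^2$ appearing in the hypothesis. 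The condition $r''_{zz}\le r''_{ww}$ handles the $g'$/$g$ terms, using $g'\ge 0$ for $\rho>0$ and the reflection $g(-\rho)=g(\rho)$. Assembling, the hypotheses of item 3 force $F''_{ww}-F''_{zz}\ge 0$; item 4 is identical after the $w\leftrightarrow z$ swap, which also swaps the two conditions, explaining the asymmetry between the hypotheses for $F$ and $G$.

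The main obstacle I anticipate is bookkeeping in the second-derivative computation: the chain rule through $\rho=w/r(w,z)$ produces many terms, and the cancellations that reduce the mess to the clean coefficient $(1/2-|r'_w|)^2+3/4-(r'_z)^2$ are delicate — in particular the constants $1/2$ and $3/4$ must come out of the explicit form of $g''$ and of the algebra, so any sign or factor error there is fatal. A secondary subtlety is that $|r'_w|$ enters through an absolute value because $g'$ changes sign with $\rho$; one has to split into the cases $w>0$ and $w<0$ (the case $w=0$ being covered by continuity) and check that the worst case is captured by replacing $r'_w$ with $|r'_w|$, which is exactly why the hypothesis is stated with $|r'_w|$ rather than $r'_w$. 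I would remark, as the paper promises, that these sufficient conditions are not sharp precisely because of this absolute-value majorization and because one discards cross terms of a definite but inconvenient sign.
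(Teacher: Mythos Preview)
Your overall plan --- compute the disc average explicitly, verify $g\in C^2$, then differentiate $r\,g(\rho)$ twice via the chain rule --- matches the paper. Items 1, 2 and the evaluation of the integrals are fine.

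Where your proposal goes wrong is in the mechanism behind items 3 and 4. You split the second-derivative expression into ``$g''$-terms'' controlled by the first-derivative hypothesis and ``$g'/g$-terms'' controlled by the second-derivative hypothesis, with the absolute value on $r'_w$ arising from a case split on the sign of $\rho$. That is not how the argument actually runs, and if you try to execute it as described you will not see the constants $1/2$ and $3/4$ emerge.

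The point you are missing is the algebraic identity
\[
h(\rho)\;:=\;g(\rho)-\rho g'(\rho)\;=\;\frac{1-\rho^{2}}{3}\,g''(\rho),\qquad h'(\rho)=-\rho g''(\rho),
\]
which is specific to this particular $g$. Because of it, \emph{every} term in $F''_{ww}-F''_{zz}$ (equivalently $G''_{zz}-G''_{ww}$) carries a factor $g''(\rho)$, and one obtains a clean factorisation
\[
\frac{r\,(G''_{zz}-G''_{ww})}{g''(\rho)}\;=\;a\rho^{2}+b\rho+c
\]
with $a=(r'_z)^{2}-(r'_w)^{2}-r(r''_{zz}-r''_{ww})/3$, $b=-r'_z$, $c=1+r(r''_{zz}-r''_{ww})/3$ (and the analogous expression for $F$ with $w\leftrightarrow z$). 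The whole problem then becomes: when is this fixed quadratic nonnegative on $[-1,1]$? The two hypotheses are not controlling separate groups of terms; they control different features of this single quadratic. Nonnegativity at the endpoints $\rho=\pm1$ is $|b|\le a+c$, which rewrites as $(r'_w)^{2}\le (|r'_z|-1/2)^{2}+3/4$ --- that is where the absolute value and the constants $1/2$, $3/4$ come from, not from the sign of $g'$. Nonnegativity at an interior minimum is guaranteed by the crude bound $c\ge 1$, i.e.\ $r''_{ww}\le r''_{zz}$; this replacement of the exact condition $c\ge b^{2}/(4a)$ by $c\ge 1$ is the actual source of non-sharpness, not ``discarding cross terms''.

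So your proposal has the right outline but the wrong internal structure; without the identity $h=(1-\rho^{2})g''/3$ and the reduction to a quadratic in $\rho$, you would not be able to derive the stated sufficient conditions.
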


\begin{proof}
Let us begin by showing the equality in equations (\ref{eqn:F}) and (\ref%
{eqn:G}), that is, evaluating the disc averages. Let $\rho (w,z)=z/r(w,z).$
The case $\left\vert \rho \right\vert \geq 1$ is immediate, since then the
integrand is linear. Thereby statement 1. and 2. follow directly.\newline
For the case $\left\vert \rho \right\vert <1$ we introduce the
nondimensional coordinates 
\begin{eqnarray}
\omega &=&\frac{w^{\prime }-w}{r}\text{, }\zeta =\frac{z^{\prime }-z}{r} \\
w^{\prime } &=&w+r\omega \text{, }z^{\prime }=z+r\zeta \\
dw^{\prime } &=&rd\omega \text{, }dz^{\prime }=rd\zeta
\end{eqnarray}%
whereby%
\begin{multline*}
G\left( w,z\right) =\frac{1}{\pi r^{2}}\diint\limits_{D_{r}\left( w,z\right)
}\left\vert z^{\prime }\right\vert dw^{\prime }dz^{\prime }=\frac{r}{\pi }%
\diint\limits_{\omega ^{2}+\zeta ^{2}\leq 1}\left\vert \frac{z}{r}+\zeta
\right\vert d\zeta d\omega \\
=-\frac{r}{\pi }\int_{-1}^{-z/r}\left( \frac{z}{r}+\zeta \right) 2\sqrt{%
1-\zeta ^{2}}d\zeta +\frac{r}{\pi }\int_{-z/r}^{1}\left( \frac{z}{r}+\zeta
\right) 2\sqrt{1-\zeta ^{2}}d\zeta \\
=\frac{2z}{\pi }\int_{-z/r}^{1}\sqrt{1-\zeta ^{2}}d\zeta -\frac{2z}{\pi }%
\int_{-1}^{-z/r}\sqrt{1-\zeta ^{2}}d\zeta \\
+\frac{r}{\pi }\int_{-z/r}^{1}2\zeta \sqrt{1-\zeta ^{2}}d\zeta -\frac{r}{\pi 
}\int_{-1}^{-z/r}2\zeta \sqrt{1-\zeta ^{2}}d\zeta .
\end{multline*}%
Applying the standard integrals%
\begin{eqnarray}
\int 2\zeta \sqrt{1-\zeta ^{2}}d\zeta &=&-\frac{2}{3}\left( 1-\zeta
^{2}\right) ^{3/2}+C \\
\int \sqrt{1-\zeta ^{2}}d\zeta &=&\frac{1}{2}\arcsin \left( \zeta \right) +%
\frac{1}{2}\zeta \sqrt{1-\zeta ^{2}}
\end{eqnarray}%
gives us%
\begin{multline*}
G\left( w,z\right) =\frac{1}{\pi r^{2}}\diint\limits_{D_{r}\left( w,z\right)
}\left\vert z^{\prime }\right\vert dw^{\prime }dz^{\prime } \\
=\frac{z}{\pi }\left[ \arcsin \left( \zeta \right) +\zeta \sqrt{1-\zeta ^{2}}%
\right] _{-z/r}^{1}-\frac{z}{\pi }\left[ \arcsin \left( \zeta \right) +\zeta 
\sqrt{1-\zeta ^{2}}\right] _{-1}^{-z/r} \\
+\frac{r}{\pi }\left[ -\frac{2}{3}\left( 1-\zeta ^{2}\right) ^{3/2}\right]
_{-z/r}^{1}-\frac{r}{\pi }\left[ -\frac{2}{3}\left( 1-\zeta ^{2}\right)
^{3/2}\right] _{-1}^{-z/r} \\
=\frac{2z}{\pi }\arcsin \left( \frac{z}{r}\right) +\frac{2z}{\pi }\frac{z}{r}%
\sqrt{1-\frac{z^{2}}{r^{2}}}+\frac{4r}{3\pi }\left( 1-\frac{z^{2}}{r^{2}}%
\right) ^{3/2} \\
=r\cdot \frac{2}{\pi }\left( \frac{z}{r}\arcsin \left( \frac{z}{r}\right) +%
\sqrt{1-\frac{z^{2}}{r^{2}}}\left( \frac{z^{2}}{r^{2}}+\frac{2}{3}\left( 1-%
\frac{z^{2}}{r^{2}}\right) \right) \right) \\
=r\cdot \frac{2}{\pi }\left( \frac{z}{r}\arcsin \left( \frac{z}{r}\right) +%
\frac{1}{3}\sqrt{1-\frac{z^{2}}{r^{2}}}\left( 2+\frac{z^{2}}{r^{2}}\right)
\right) =rg\left( \frac{z}{r}\right) \text{.}
\end{multline*}%
which proves the equality in (\ref{eqn:G}). Proving the disc average $F(w,z)$%
, (\ref{eqn:F}), is analogous.\newline
In a preamble to studying how the derivatives of $F(w,z)$ and $G(w,z)$
behaves, we note that $g\left( \rho \right) \geq 0$ and 
\begin{equation}
g^{\prime }\left( \rho \right) =\left\{ 
\begin{array}{ccc}
\left. 2\left( \arcsin \rho +\rho \sqrt{1-\rho ^{2}}\right) \right/ \pi & 
\text{if} & \left\vert \rho \right\vert <1 \\ 
\rho \left/ \left\vert \rho \right\vert \right. & \text{if} & \left\vert
\rho \right\vert >1%
\end{array}%
\right.  \label{eqn:g'}
\end{equation}%
and%
\begin{equation}
0\leq g^{\prime \prime }\left( \rho \right) =\left\{ 
\begin{array}{ccc}
\left. 4\sqrt{1-\rho ^{2}}\right/ \pi & \text{if} & \left\vert \rho
\right\vert <1 \\ 
0 & \text{if} & \left\vert \rho \right\vert >1%
\end{array}%
\right.  \label{eqn:g''}
\end{equation}%
and $g\left( \rho \right) \rightarrow 1$, $g^{\prime }\left( \rho \right)
\rightarrow \pm 1$ and $g^{\prime \prime }\left( \rho \right) \rightarrow 0$
as $\rho \rightarrow \pm 1$, hence $g\in C^{2}\left( \mathbb{R}\right) $ and
convex. However, $g$ is not $C^{3}$ since%
\begin{equation}
g^{\prime \prime \prime }\left( \rho \right) =-\frac{4}{\pi }\rho \left(
1-\rho ^{2}\right) ^{-1/2}\rightarrow \pm \infty \text{ as }\rho \rightarrow
\pm 1\text{, }\left\vert \rho \right\vert <1\text{. }  \label{eqn:g'''}
\end{equation}%
To simplify calculations, we define the auxiliary function%
\begin{equation}
h\left( \rho \right) =\left\{ 
\begin{array}{ccc}
\left. 4\left( 1-\rho ^{2}\right) ^{\frac{3}{2}}\right/ \left( 3\pi \right)
& \text{if} & \left\vert \rho \right\vert <1 \\ 
0 & \text{if} & \left\vert \rho \right\vert \geq 1%
\end{array}%
\right. ,
\end{equation}%
and note that%
\begin{eqnarray}
h\left( \rho \right) &=&g\left( \rho \right) -\rho g^{\prime }\left( \rho
\right) =\left( 1-\rho ^{2}\right) g^{\prime \prime }\left( \rho \right) /3%
\text{, }\rho \neq 1 \\
h^{\prime }\left( \rho \right) &=&-\rho g^{\prime \prime }\left( \rho
\right) \text{, }\rho \neq 1.
\end{eqnarray}%
And, in addition, we observe that since%
\begin{equation}
\rho \left( w,z\right) =\frac{z}{r\left( w,z\right) }
\end{equation}%
its first partial derivatives are given by%
\begin{equation}
\rho _{w}^{\prime }=-\frac{\rho r_{w}^{\prime }}{r}\text{, }\rho
_{z}^{\prime }=\frac{1-\rho r_{z}^{\prime }}{r}.
\end{equation}%
Now we are ready to study the properties of the second derivatives of $G$,
with the aim of proving points 2. and 4. in the Lemma. Let us remind
ourselves that $G$ is defined as, compare equation (\ref{eqn:G}),%
\begin{equation*}
G(w,z)=r(w,z)g(\rho (w,z))\text{.}
\end{equation*}%
Taking derivatives, first once and then twice, with respect to $w$ and $z$
respectively gives%
\begin{equation}
G_{w}^{\prime }=r_{w}^{\prime }g+rg^{\prime }\rho _{w}^{\prime
}=r_{w}^{\prime }\left( g-\rho g^{\prime }\right) =hr_{w}^{\prime }
\end{equation}%
and%
\begin{equation}
G_{z}^{\prime }=r_{z}^{\prime }g+rg^{\prime }\rho _{z}^{\prime
}=r_{z}^{\prime }\left( g-\rho g^{\prime }\right) +g^{\prime
}=hr_{z}^{\prime }+g^{\prime }
\end{equation}%
and%
\begin{equation}
G_{ww}^{\prime \prime }=h^{\prime }\rho _{w}^{\prime }r_{w}^{\prime
}+hr_{ww}^{\prime \prime }=-\rho g^{\prime \prime }\left( -\rho
r_{w}^{\prime }/r\right) r_{w}^{\prime }+hr_{ww}^{\prime \prime }=g^{\prime
\prime }\left( \rho r_{w}^{\prime }\right) ^{2}/r+hr_{ww}^{\prime \prime }
\label{eqn:G''ww}
\end{equation}%
and%
\begin{equation}
G_{zz}^{\prime \prime }=g^{\prime \prime }\left( \rho r_{z}^{\prime }\right)
^{2}/r+hr_{zz}^{\prime \prime }+g^{\prime \prime }\rho _{z}^{\prime
}=g^{\prime \prime }\rho r_{z}^{\prime }\left( \rho r_{z}^{\prime }-1\right)
/r+hr_{zz}^{\prime \prime }+g^{\prime \prime }/r\text{.}  \label{eqn:G_zz}
\end{equation}%
Rearranging the terms and combining (\ref{eqn:G''ww}) and (\ref{eqn:G_zz})
gives%
\begin{multline*}
r\left( G_{zz}^{\prime \prime }-G_{ww}^{\prime \prime }\right) =g^{\prime
\prime }\left( \left( \rho r_{z}^{\prime }\right) ^{2}-\left( \rho
r_{w}^{\prime }\right) ^{2}+1-\rho r_{z}^{\prime }\right) +h\left(
rr_{zz}^{\prime \prime }-rr_{ww}^{\prime \prime }\right) \\
=g^{\prime \prime }\left( \left( \rho r_{z}^{\prime }\right) ^{2}-\left(
\rho r_{w}^{\prime }\right) ^{2}+1-\rho r_{z}^{\prime }+\left( 1-\rho
^{2}\right) \left( rr_{zz}^{\prime \prime }-rr_{ww}^{\prime \prime }\right)
/3\right) .
\end{multline*}%
Dividing by $g^{\prime \prime }$ we denote the resulting quadratic
polynomial in $\rho $ by $p(\rho )$%
\begin{equation}
\frac{r\left( G_{zz}^{\prime \prime }-G_{ww}^{\prime \prime }\right) }{%
g^{\prime \prime }}=\rho ^{2}\left( \left( r_{z}^{\prime }\right)
^{2}-\left( r_{w}^{\prime }\right) ^{2}-\left( rr_{zz}^{\prime \prime
}-rr_{ww}^{\prime \prime }\right) /3\right) -\rho r_{z}^{\prime }+1+\left(
rr_{zz}^{\prime \prime }-rr_{ww}^{\prime \prime }\right) /3\equiv p\left(
\rho \right) \text{.}  \label{eqn:quadpoly}
\end{equation}%
A quadratic polynomial $p\left( \rho \right) =a\rho ^{2}+b\rho +c$ is $\geq
0 $ for $-1\leq \rho \leq 1$ if and only if 1) The values at the endpoints
are nonnegative, i.e., $p\left( -1\right) \geq 0$ and $p\left( 1\right) \geq
0$, which is equivalent to $\left\vert b\right\vert \leq a+c$, and 2) if
there is local minimum between the endpoints, i.e., if $\left\vert
b\right\vert \leq 2\left\vert a\right\vert $, then this minimum is
nonnegative, i.e, $c\geq b^{2}/\left( 4a\right) $. If $\left\vert
b\right\vert \leq 2\left\vert a\right\vert $ then $b^{2}/\left( 4a\right)
\leq 1$ so a simpler, sufficient condition would be $\left\vert b\right\vert
\leq a+c$ and $c\geq 1$. Applying this condition to (\ref{eqn:quadpoly}) we
see that%
\begin{equation}
\frac{r\left( G_{zz}^{\prime \prime }-G_{ww}^{\prime \prime }\right) }{%
g^{\prime \prime }}\geq 0  \label{eqn:differenceG''}
\end{equation}%
if $\left\vert r_{z}^{\prime }\right\vert \leq 1+\left( r_{z}^{\prime
}\right) ^{2}-\left( r_{w}^{\prime }\right) ^{2}$ and $1+\left(
rr_{z}^{\prime \prime }-rr_{ww}^{\prime \prime }\right) /3\geq 1$, which is
equivalent to 
\begin{eqnarray}
\left( r_{w}^{\prime }\right) ^{2} &\leq &\left( 1/2-\left\vert
r_{z}^{\prime }\right\vert \right) ^{2}+3/4\text{ and}  \label{eqn:cond_r'}
\\
r_{ww}^{\prime \prime } &\leq &r_{zz}^{\prime \prime }\text{.}
\label{eqn:cond_r''}
\end{eqnarray}%
The last condition is only needed when%
\begin{equation}
\left\vert r_{z}^{\prime }\right\vert <2\left\vert \left( r_{z}^{\prime
}\right) ^{2}-\left( r_{w}^{\prime }\right) ^{2}-\left( rr_{zz}^{\prime
\prime }-rr_{ww}^{\prime \prime }\right) /3\right\vert \text{. }
\end{equation}%
\newline
The radius function $r$ is always positive, and by (\ref{eqn:g''}) we have $%
g^{\prime \prime }>0$ for $\left\vert \rho \right\vert <1$, hence equation (%
\ref{eqn:differenceG''}) implies that $G_{zz}^{\prime \prime
}-G_{ww}^{\prime \prime }>0$ if conditions (\ref{eqn:cond_r'}) and (\ref%
{eqn:cond_r''}) are satisfied. Remember that $\rho =z/r(w,z)$, hence $%
\left\vert \rho \right\vert <1$ is equivalent to $\left\vert z\right\vert
<r(w,z)$. Thus we have proved statement 4. in the lemma.\newline
We remark that the conditions stated in the lemma (and Theorem \ref{Thm:Main}%
) are not sharp since we chose a simplified condition in the quadratic
polynomial, if more precision is needed we the exact condition%
\begin{equation}
1+\left( rr_{zz}^{\prime \prime }-rr_{ww}^{\prime \prime }\right) /3\geq 
\frac{\left( r_{z}^{\prime }\right) ^{2}}{4\left( \left( r_{z}^{\prime
}\right) ^{2}-\left( r_{w}^{\prime }\right) ^{2}-\left( rr_{zz}^{\prime
\prime }-rr_{ww}^{\prime \prime }\right) /3\right) }
\end{equation}%
which is equivalent to%
\begin{equation}
\frac{\left( r_{z}^{\prime }\right) ^{2}-4\left( \left( r_{z}^{\prime
}\right) ^{2}-\left( r_{w}^{\prime }\right) ^{2}-\left( rr_{zz}^{\prime
\prime }-rr_{ww}^{\prime \prime }\right) /3\right) \left( rr_{zz}^{\prime
\prime }-rr_{ww}^{\prime \prime }\right) /3}{4\left( \left( r_{z}^{\prime
}\right) ^{2}-\left( r_{w}^{\prime }\right) ^{2}-\left( rr_{zz}^{\prime
\prime }-rr_{ww}^{\prime \prime }\right) /3\right) }\leq 1
\end{equation}%
can be employed.\newline
Next, let us consider the remaining unproved statement, statement 3. It
follows analogously by the arguments above by interchanging the roles of $w$
and $z$. That is, let us consider%
\begin{equation}
F\left( w,z\right) =\frac{1}{\pi r^{2}}\diint\limits_{D_{r}\left( w,z\right)
}\left\vert w^{\prime }\right\vert dw^{\prime }dz^{\prime }.
\end{equation}%
Interchanging the roles of $w$ and $z$, and applying the the same arguments
as above to%
\begin{equation}
F\left( w,z\right) =r\left( w,z\right) g\left( \frac{w}{r\left( w,z\right) }%
\right)
\end{equation}%
we obtain%
\begin{equation}
\frac{r\left( F_{ww}^{\prime \prime }-F_{zz}^{\prime \prime }\right) }{%
g^{\prime \prime }}\geq 0
\end{equation}%
if 
\begin{equation}
\left( r_{z}^{\prime }\right) ^{2}\leq \left( 1/2-\left\vert r_{w}^{\prime
}\right\vert \right) ^{2}+3/4
\end{equation}%
and%
\begin{equation}
r_{zz}^{\prime \prime }\leq r_{ww}^{\prime \prime }\text{.}
\end{equation}%
Thus we have proved the lemma.
\end{proof}

We are now in a position to prove Theorem \ref{Thm:Main}.

\begin{proof}
Let $r(w,z)$ be a twice continously differentiable and strictly positive
function such that $\left( r_{z}^{\prime }\right) ^{2}\leq \left(
1/2-\left\vert r_{w}^{\prime }\right\vert \right) ^{2}+3/4$ and $%
r_{zz}^{\prime \prime }\leq r_{ww}^{\prime \prime }$. Considering the
definition of $\bar{W}\left( u,v\right) $ in (\ref{eqn:defW_bar}) and
inserting the expression for the lower Frech\'{e}t-Hoeffding copula $W$
stated in (\ref{eqn:W}) gives 
\begin{equation*}
\bar{W}\left( u,v\right) =\frac{1}{\pi r^{2}\left( w,z\right) }%
\diint\limits_{D_{r\left( w,z\right) }\left( w,z\right) }\frac{w^{\prime
}+\left\vert w^{\prime }\right\vert }{\sqrt{2}}dw^{\prime }dz^{\prime }.
\end{equation*}%
Applying Lemma \ref{Lemma:Average} we obtain%
\begin{equation*}
\bar{W}\left( u,v\right) =\frac{w+F\left( w,z\right) }{\sqrt{2}}=\frac{1}{%
\sqrt{2}}\left( w+r\left( z\right) g\left( \frac{w}{r\left( w,z\right) }%
\right) \right) .
\end{equation*}%
Likewise, letting $r(w,z)$ be a twice continously differentiable and
strictly positive function such that $\left( r_{w}^{\prime }\right) ^{2}\leq
\left( 1/2-\left\vert r_{z}^{\prime }\right\vert \right) ^{2}+3/4$ and $%
r_{ww}^{\prime \prime }\leq r_{zz}^{\prime \prime }$, and inserting the
expression for the upper Frech\'{e}t-Hoeffding copula $M$, stated in (\ref%
{eqn:M}), into the definition of $\bar{M}\left( u,v\right) $ given in (\ref%
{eqn:defM_bar}) yields 
\begin{multline*}
\bar{M}\left( u,v\right) \equiv \frac{1}{\pi r^{2}\left( w,z\right) }%
\diint\limits_{D_{r\left( w,z\right) }\left( w,z\right) }\frac{w^{\prime
}-\left\vert z^{\prime }\right\vert }{\sqrt{2}}+\frac{1}{2}dw^{\prime
}dz^{\prime } \\
=\frac{1}{\sqrt{2}\pi r^{2}\left( w,z\right) }\diint\limits_{D_{r\left(
w,z\right) }\left( w,z\right) }\left( w^{\prime }+\frac{1}{\sqrt{2}}\right)
dw^{\prime }dz^{\prime }-\frac{1}{\sqrt{2}\pi r^{2}\left( w,z\right) }%
\diint\limits_{D_{r\left( w,z\right) }\left( w,z\right) }\left\vert
z^{\prime }\right\vert dw^{\prime }dz^{\prime }
\end{multline*}%
Applying Lemma \ref{Lemma:Average} gives%
\begin{equation*}
\bar{M}\left( u,v\right) =\frac{w+1/\sqrt{2}-G\left( w,z\right) }{\sqrt{2}}=%
\frac{1}{2}+\frac{1}{\sqrt{2}}\left( w-r\left( w,z\right) g\left( \frac{z}{%
r\left( w,z\right) }\right) \right) .
\end{equation*}%
Since $r,g\in C^{2}$ it follows that $\bar{W}$ and $\bar{M}$ are $C^{2}$;
however they are not $C^{3}$ since $g^{\prime \prime \prime }\left( \rho
\right) $ is unbounded as $\rho \nearrow 1$ or $\rho \searrow -1$. The
sufficient conditions on $r\left( w,z\right) $ in in the lemma are
fullfilled in the respective cases. Therefore $F_{ww}^{\prime \prime
}-F_{zz}^{\prime \prime }\geq 0$ and $G_{zz}^{\prime \prime }-G_{ww}^{\prime
\prime }\geq 0$, and hence it follows, see the copula definition (\ref%
{eqn:defCopulaC_wz''}), that $\bar{W}_{uv}^{\prime \prime }=\bar{W}%
_{ww}^{\prime \prime }-\bar{W}_{zz}^{\prime \prime }\geq 0$ and $\bar{M}%
_{uv}^{\prime \prime }=\bar{M}_{ww}^{\prime \prime }-\bar{M}_{zz}^{\prime
\prime }\geq 0$. Furthermore, the averaging method employed does not alter
the boundary values of the copula, hence also the copula condition (\ref%
{eqn:defCopulaC_wzBoundary}) is satisfied since $W$ and $M$ are copulas.
Thus we have proved that $\bar{W}$ and $\bar{M}$ are absolutely continuous
copulas.
\end{proof}

\end{document}